\newtheorem{thm}{Theorem}
\newtheorem{cor}[thm]{Corollary}
\newtheorem{prop}[thm]{Proposition}
\theoremstyle{definition}
\newtheorem{rk}[thm]{Remark}
\newtheorem{remark}[thm]{Remark}
\newcommand{\R}{\mathbb{R}}
\newcommand{\Z}{\mathbb{Z}}
\def \Push {\mathcal Push}
\def \x {\times}
\def \eu{{\text{e}}}
\DeclareMathOperator{\Diff}{Diff}
\DeclareMathOperator{\Map}{Map}
\DeclareMathOperator{\Homeo}{Homeo}
\DeclareMathOperator{\Hyper}{\mathcal{H}^2}
\begin{document}

\title[Flat bundles and commutator lengths]
{Flat bundles and commutator lengths}

\author[R. \.{I}. Baykur]{R. \.{I}nan\c{c} Baykur}
\address{Max Planck Institute for Mathematics, Bonn, Germany \newline
\indent Department of Mathematics, Brandeis University, Waltham MA, USA}
\email{baykur@mpim-bonn.mpg.de, baykur@brandeis.edu}

\begin{abstract}
The purpose of this article is two-fold: We first give a more elementary proof of a recent theorem of Korkmaz, Monden, and the author, which states that the commutator length of the $n$-th power of a Dehn twist along a boundary parallel curve on a surface with boundary $\Sigma$ of genus $g \geq 2$ is $\lfloor \frac{|n|+3)}{2} \rfloor$ in the mapping class group $\Map(\Sigma)$. The alternative proof we provide goes through push maps and Morita's use of Milnor-Wood inequalities, in particular it does not appeal to gauge theory. In turn, we produce infinite families of pairwise non-homotopic $4$-manifolds admitting genus $g$ surface bundles over genus $h$ surfaces with distinguished sections which are flat but admit no flat connections for which the sections are flat, for every fixed pairs of integers $g, h \geq 2$. The latter result generalizes a theorem of Bestvina, Church, and Souto, and allows us to obtain a simple proof of Morita's non-lifting theorem (for an infinite family of non-conjugate subgroups) in the case of marked surfaces.
\end{abstract}

\maketitle

\setcounter{secnumdepth}{2}
\setcounter{section}{0}

% ==========================================================================================================

\section{Introduction}

The study of commutator lengths in various structure groups for fiber bundles has a long history in topology, where Milnor's 1958 paper on flat disk bundles \cite{M} and Wood's follow-up 1971 paper on flat circle bundles \cite{W} played avant-garde roles. More recently, several results on (stable) commutator lengths in mapping class groups of surfaces were obtained with the help of gauge theory; see for instance \cite{EK, Kork, kot}. In \cite{BKM}, Korkmaz, Monden, and the author proved the following theorem in the same vein: Let $\delta$ be a boundary parallel simple-closed curve on an orientable surface $\Sigma$ of genus $g \geq 2$ with boundary, and let $t_{\delta}$ denote the positive Dehn twist along $\delta$ in the mapping class group $\Map(\Sigma)$. Then the commutator length of $t_{\delta}^n$ is $\lfloor \frac{|n|+3}{2} \rfloor$, the floor of $\frac{|n|+3}{2} \in \Z[\frac{1}{2}]$. This led the first precise calculation of a non-zero stable commutator length of an element in a mapping class group of a surface of genus $g \geq 2$. (See Theorem~\ref{BKMtheorem} below.) The authors' proof of this theorem relied on celebrated results on Seiberg-Witten invariants of symplectic $4$-manifolds, and mapping class group factorizations featuring a generalized lantern relation. We will give a more elementary proof of this theorem by reviewing an argument of Morita in \cite{Morita} using euler classes of fiber bundles and Milnor-Wood inequalities on one hand, and by employing push maps on the other; see Section~\ref{sections}. 

A curious open question on surface bundles asks whether or not there exists a closed surface bundle over a closed surface which does not admit a flat connection. An `approximate' answer to this problem was given by Bestvina, Church, and Souto, who proved that the Atiyah-Kodaira surface bundles do not admit flat connections for which some distinguished sections are flat \cite{BCS}. The Atiyah-Kodaira examples are holomorphic bundles on complex surfaces of general type, whose construction for given fiber and base genera $(g,h)$ (with the desired sections of high self-intersection numbers) is a rather challenging task. Moreover, Parshin's proof of the Geometric Shafarevich Conjecture implies that there can be at most finitely many of such examples for fixed $(g,h)$. However, the holomorphicity of these bundles is irrelevant to the above question --- as we will see that it can be dropped so as to obtain a much more general result, which is the main theorem of this article:

\begin{thm} \label{mainthm}
For every fixed pair of integers $g, h \geq 2$ such that $h \geq 4$ for $g=2$, there is an infinite family of pairwise non-homotopic $4$-manifolds, each of which admits a genus $g$ surface bundle over a genus $h$ surface with a distinguished section, such that the bundle admits a flat connection but does not admit any for which the section is parallel. 
\end{thm}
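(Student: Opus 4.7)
The plan is to combine Theorem~\ref{BKMtheorem} with Morita's Milnor--Wood style obstruction for parallel sections, packaging the former as a source of explicit flat surface bundles and the latter as the non-lifting obstruction that prevents a prescribed section from being horizontal. I would start with the upper-bound direction of Theorem~\ref{BKMtheorem}: fixing $(g,h)$ and an integer $n$ in the admissible range, I would write $t_\delta^n = \prod_{i=1}^{h}[a_i,b_i]$ in $\Map(\Sigma_g^1)$ (or, more flexibly, in $\Map(\Sigma_g^k)$ with several boundary components, so as to broaden the range of realizable self-intersections on individual sections). Projecting through $\Map(\Sigma_g^1)\to \Map(\Sigma_g,*)$ kills the boundary twist and produces a representation $\rho_n : \pi_1(\Sigma_h)\to \Map(\Sigma_g,*)$, which classifies a smooth $\Sigma_g$-bundle $E_n\to\Sigma_h$ with a distinguished section $s_n$ whose self-intersection equals the pulled-back Euler class of the central extension $1\to\Z\to\Map(\Sigma_g^1)\to\Map(\Sigma_g,*)\to 1$, namely $n$.

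For the flatness of $E_n$, I would use the push-map viewpoint developed in Section~\ref{sections}: the $a_i,b_i$ in the factorization can be chosen from a subgroup generated by push maps and Dehn twists along simple closed curves, each of which has an explicit lift into $\Diff^+(\Sigma_g^1)$ (time-$1$ flows of supported vector fields for push maps, the standard model for Dehn twists). Projecting these lifts to $\Diff^+(\Sigma_g)^\delta$ assembles into a discrete monodromy, which furnishes the sought flat structure on $E_n$. For the non-parallelism of $s_n$, I would argue by contradiction in the spirit of Morita: a flat connection making $s_n$ parallel would yield a lift $\tilde\rho_n : \pi_1(\Sigma_h)\to\Diff^+(\Sigma_g, s_n(*))^\delta$, and the derivative at $s_n(*)$ would present $\nu(s_n)$ as a flat $\GL^+(2,\R)$-bundle of Euler number $n = s_n\cdot s_n$; the Milnor--Wood inequality then forces $|n|\leq 2h-2$. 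By calibrating the BKM construction so that the chosen section has self-intersection beyond this threshold---leveraging the extra slack provided by multi-boundary products in $\Map(\Sigma_g^k)$---the contradiction closes.

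To produce an infinite family of pairwise non-homotopic total spaces for each fixed $(g,h)$, I would vary the factorization: distinct factorizations of the same central element as $h$ commutators yield non-conjugate representations, and the resulting 4-manifolds can be distinguished by their signatures (via Meyer's cocycle) or by the isomorphism class of $\pi_1(E_n)$ as an extension $1\to\pi_1(\Sigma_g)\to \pi_1(E_n)\to\pi_1(\Sigma_h)\to 1$. The hardest step is precisely this calibration: balancing the commutator-length budget granted by Theorem~\ref{BKMtheorem} against the Milnor--Wood bound on the normal bundle so that a genuine gap opens up between what one can construct and what a flat parallel section would permit. This juggling act is also where the extra condition $h\geq 4$ for $g=2$ enters, as in genus two the push-map relations are more limited, and a larger base genus is needed to absorb the factorization while still violating the Milnor--Wood obstruction on the distinguished section.
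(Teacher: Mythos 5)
Your overall architecture (realize a large self-intersection section via a commutator factorization of $t_\delta^n$, lift the monodromy to diffeomorphisms for flatness, and invoke a Milnor--Wood obstruction against a parallel section) is the paper's architecture, but the central quantitative step is wrong, and it is wrong in a way that makes your "calibration" impossible. You apply the bound $|n|\leq 2h-2$ to the flat $\GL^+(2,\R)$-structure on $\nu(s_n)$ coming from the derivative of a lift, and then propose to choose the section with self-intersection \emph{beyond} $2h-2$. Two problems: first, Milnor's inequality for flat $\GL^+(2,\R)$-bundles over $\Sigma_h$ is $|e|\leq h-1$, not $2h-2$; the bound $2h-2$ is Wood's inequality for $\Homeo^+(S^1)$, which applies to \emph{every} section of \emph{every} $\Sigma_g$-bundle via the Nielsen action on the circle at infinity (this is exactly Proposition~\ref{labelprop}). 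Second, and consequently, no calibration can push a section past $2h-2$: that is a universal ceiling, and indeed Theorem~\ref{BKMtheorem} says $h$ commutators buy you at most $t_\delta^{2h-2}$. The whole point is the \emph{gap} between the two bounds: the construction realizes $|[S]^2|=2h-2$, which is permitted for an arbitrary section but violates the sharper parallel-section bound $h-1$ as soon as $h\geq 2$. Your proof as written closes no contradiction, because the inequality you derive ($|n|\leq 2h-2$) is automatically satisfied by anything you can build. This also explains why your account of the $h\geq 4$ condition for $g=2$ misses the mechanism: in genus two the auxiliary curve used to vary the family cannot be kept disjoint from the lantern curves, so the paper absorbs it into an extra commutator, raising the base genus and dropping the section's self-intersection to $2h-4$; the obstruction $2h-4>h-1$ then forces $h\geq 4$.

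Two further steps need more than you give them. For flatness it is not enough that each generator lifts to $\Diff^+(\Sigma_g)$: the lifted word must equal the identity \emph{on the nose}, not merely up to isotopy. The paper arranges this by choosing a factorization which, after capping off the boundary, consists of commutators $[x,1]$ and commutators of diffeomorphisms with disjoint supports, so the lifted relation holds literally in $\Diff^+_0(\Sigma_g)$. And for the infinite family, the signature will not separate the examples (these bundles are section/fiber-sum stabilizations of a fixed bundle, so their signatures agree); the paper instead inserts $t_b^m t_b^{-m}$ into the factorization and detects $m$ through the torsion $\Z_m$ in $H_1(X_m)$. Your alternative of distinguishing the extensions $1\to\pi_1(\Sigma_g)\to\pi_1(E_n)\to\pi_1(\Sigma_h)\to 1$ could in principle work, but you would still need a computable invariant, which is what the $H_1$ computation supplies.
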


\noindent As suggested by the abundance of the bundles we build versus the finiteness result of Parshin's, we will note that the total spaces of these bundles do not admit complex structures with either orientations (although all can be made symplectic), which in this sense complements the examples of \cite{BCS}. 

In \cite{Morita}, Morita showed that the exact sequence
\[
1 \to \Diff ^+_0(\Sigma) \to \Diff ^+ (\Sigma) \to \Map(\Sigma) \to 1
\]
is not right-split for $\Sigma$ an orientable genus $g \geq 2$ surface with $k \geq 1$ marked points. Here $\Diff ^+_0(\Sigma)$ denotes the normal subgroup of $\Diff ^+ (\Sigma)$ which consists of elements that are isotopic to identity via isotopies fixing the marked points. We give a new proof of this theorem, which follows from:

\begin{cor} \label{cormorita}
For each $g, h \geq 2$, there are homomorphisms $\pi_1(\Sigma_{h}) \to \Map (\Sigma)$, where $\Sigma$ is a genus $g$ surface with $r \geq 1$ marked points, whose images do not lift to $\Diff ^+ (\Sigma)$. There are infinitely many conjugacy classes of such mappings for each $g, h \geq 2$, provided $h \geq 4$ if $g=2$. 
\end{cor}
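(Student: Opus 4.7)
The plan is to translate the assertion into the language of surface bundles with distinguished sections and then apply Theorem~\ref{mainthm} directly. Under the standard dictionary, a homomorphism $\rho : \pi_1(\Sigma_h) \to \Map(\Sigma)$, where $\Sigma$ denotes the genus $g$ surface with marked points $\{p_1,\ldots,p_r\}$, is the monodromy of a $\Sigma_g$-bundle $E \to \Sigma_h$ equipped with $r$ pairwise disjoint distinguished sections $\sigma_1,\ldots,\sigma_r$ that meet a chosen reference fiber in the $p_i$. Crucially, a lift of $\rho$ to $\Diff^+(\Sigma) = \Diff^+(\Sigma_g, \{p_1,\ldots,p_r\})$ corresponds precisely to a flat connection on $E$ for which every $\sigma_i$ is parallel.

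With this dictionary in hand, the case $r=1$ is immediate: for each admissible pair $(g,h)$, Theorem~\ref{mainthm} produces an infinite family of flat bundles $E_i \to \Sigma_h$ together with distinguished sections $\sigma_i$ that cannot be made parallel by any flat connection, so the monodromies $\rho_i : \pi_1(\Sigma_h) \to \Map(\Sigma_g, p_1)$ have no lifts to $\Diff^+(\Sigma_g, p_1)$. For $r \geq 2$, I would supplement each $E_i$ with $r-1$ auxiliary disjoint sections---for instance as parallel copies of $\sigma_i$ inside a chosen tubular neighborhood, or, should the self-intersection of $\sigma_i$ obstruct this, by a mild variant of the construction underlying Theorem~\ref{mainthm} which directly produces a bundle with $r$ disjoint sections exhibiting the same non-parallelism. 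Any lift $\pi_1(\Sigma_h) \to \Diff^+(\Sigma)$ of the resulting representation would, upon forgetting $p_2,\ldots,p_r$, yield a lift of exactly the form ruled out by the $r=1$ case, a contradiction.

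The infinitude of non-conjugate representations is then essentially free: two homomorphisms $\pi_1(\Sigma_h) \to \Map(\Sigma)$ that are conjugate in $\Map(\Sigma)$ give rise to isomorphic surface bundles with their sections, yet the total spaces $E_i$ of Theorem~\ref{mainthm} are pairwise non-homotopic and hence certainly not bundle-isomorphic, so the $\rho_i$ fall into infinitely many conjugacy classes. The main obstacle in this argument is entirely concentrated in Theorem~\ref{mainthm}; once the latter is in place, Corollary~\ref{cormorita} reduces to the translation between flat connections and lifts of monodromy representations, plus a small amount of bookkeeping for the extra marked points.
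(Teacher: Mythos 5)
Your proposal follows essentially the same route as the paper: translate lifts of the monodromy to $\Diff^+(\Sigma)$ into flat connections making the $r$ distinguished sections parallel, invoke Theorem~\ref{mainthm} (really Proposition~\ref{labelprop} applied to the section $S$ of self-intersection $2h-2$) to rule these out, and distinguish conjugacy classes via the pairwise non-homotopic total spaces $X_m$. The one place you are imprecise is the production of the $r-1$ extra disjoint sections for $r\geq 2$: your first suggestion, parallel copies of the bad section $\sigma_i=S$ in a tubular neighborhood, does fail, since $[S]^2=2h-2\neq 0$ forces any pushoff of $S$ to intersect $S$. Your fallback (``a mild variant of the construction'') is not actually needed, because the bundles of Theorem~\ref{mainthm} already come equipped with a second section $S_0$ of self-intersection $0$ \emph{disjoint} from $S$; its parallel pushoffs supply $r-1$ disjoint sections, and this is exactly how the paper handles $r\geq 2$ (and is the reason the paper emphasizes the existence of $S_0$ as a feature of its construction). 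With that detail filled in, your argument coincides with the paper's.
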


\noindent The strategy for deriving the above corollary from Theorem~\ref{mainthm} is similar to that of \cite{BCS}, except that our topological constructions drastically simplify the generalization of the $r=1$ case to any $r>1$.  

\vspace{0.2in}
\noindent \textit{Acknowledgements.} The author would like to thank Dan Margalit and Ursula Hamenst\"adt for their comments on our earlier work in \cite{BKM}, which led the author to explore the more elementary arguments given for the main result of \cite{BKM} on the commutator length of Dehn twists along boundary parallel curves. We are also grateful to the anonymous referee for pointing out the connections to Morita's work in \cite{Morita2}. The author was partially supported by the NSF grant DMS-0906912.
% We are grateful to Mustafa Korkmaz for several helpful discussions.

%\newpage
\vspace{0.2in}
% ==========================================================================================================
\section{Sections of flat bundles and commutator lengths} \label{sections}

Let $\Sigma_{g,r}^s$ denote a compact oriented surface of genus $g$ with $s$ boundary components and $r$ marked points in the interior. We denote by $\Diff ^+ (\Sigma)$ the group of orientation preserving self-diffeomorphisms of $\Sigma=\Sigma_{g,r}^s$ which restrict to the identity on the marked points and on some open neighborhood of the boundary. The identity component of this group is denoted by $\Diff_0 ^+ (\Sigma)$. The \emph{mapping class group\,} of $\Sigma$ is then defined as $\Map(\Sigma)=\Diff^+(\Sigma)/ \Diff_0^+(\Sigma)$. Whenever $r$ or $s$ is zero, we simply drop it from the notation.

Let $f: X \to B$ be a (locally trivial) oriented bundle over a compact oriented surface $B$ with a regular fiber $F \cong \Sigma_g^s$ of genus $g \geq 1$. Note that when $s >0$, the restriction of $f$ to $\partial X$ is a circle bundle over $B$. Let $\sigma: B \to \text{Int}(X)$ be a section of $f: X \to B$, then the $2$-dimensional submanifold $S=\sigma(B)$ of $X$ intersects each fiber positively at one point. Conversely, whenever we have such a submanifold $S \subset \text{Int}(X)$ intersecting each fiber at one point, we can define a section $\sigma$ with image $S$. So by a slight abuse of language, we will also call $S$ a \emph{section} of $f$. 

A theorem of Earle and Eells, and its extension to compact surfaces with boundary by Earle and Schatz, show that $\text{Diff}^+_0(\Sigma_{g}^s)$ is contractible for $g \geq 2$. It follows that the bundles $f: X \to B$ with regular fiber $F \cong \Sigma_g$ and with distinguished sections $S_f$ are classified (up to bundle isomorphisms) by the \textit{monodromy representations} $\mu_f: \pi_1(B) \to \Map(\Sigma_{g,1})$ (up to conjugacy), and such a bundle admits a flat connection for which $S$ is parallel if and only if the corresponding representation can be lifted to a map $\pi_1(B) \to \text{Diff}^+_0(\Sigma_{g,1})$. The analogous statements hold when we drop the assumption on the sections and the marked point above. 

\subsection{Upper bounds on the self-intersection numbers of sections} \

A sharp upper bound on the self-intersection number of a section of a surface bundle over a surface was originally obtained by Morita in \cite{Morita2}, and was later obtained using gauge theory in \cite{BKM} and \cite{Bow}, respectively. This bound constitutes one side of the inequality leading to the calculation of the precise commutator length of the boundary parallel Dehn twist in $\Map(\Sigma_g^1)$ \cite{BKM}. For completeness, we provide a detailed account of the arguments yielding to the proof of this fact, which  follow the same approach as in \cite{Morita2}.

For $g \geq 2$, a classical construction due to Nielsen shows that the self-diffeo-morphisms of $\Sigma_g$ fixing a marked point $p \in \Sigma_g$ can be lifted to $\pi_1(\Sigma_g)$-equivariant quasi-isomorphisms of the hyperbolic space $\Hyper$, which in turn can be extended to obtain a well-defined homeomorphism on the boundary $\partial \Hyper \cong S^1$. Using the fact that $\Sigma_g$ is a $K(\pi_1(\Sigma_g), 1)$, it is then easy to see that we get a monomorphism (e.g. \cite[Section~5.5.6]{FM})
\[
\Map(\Sigma_{g,1}) \to \Homeo ^+(S^1) \, .
\]
It then follows from the classical result of Wood's \cite{W} that the euler number of this circle bundle on $S$ in absolute value is bounded above by $2h-2$ for $h \geq 1$. 

If $f:X \to \Sigma_h$ admits a flat connection for which $S$ is parallel, then we get the following lift:
\[
\xymatrix{ & \Diff ^+(\Sigma_{g,1}) \ar[d] \\\ \pi_1(B) \ar[r]^{\mu _f} \ar@{-->}[ur]^{\bar{\mu} _f} & \Map (\Sigma_{g,1}) \, } 
\]
of the monodromy map $\mu_f$. In this case we obtain a refinement of the above observation: We have an induced flat structure on the normal bundle $\nu_S$ of $S$, which is given by composing $\bar{\mu} _f$ with the derivative map at the fixed point $p$:
\[\text{Diff}^+(\Sigma_{g,1}) \stackrel{D_p} \rightarrow GL^+(T_p \Sigma_h) = GL^+(2, \R).\]
As shown by Milnor in \cite{M}, in this case, the euler number of the normal disk bundle on $S$ (and thus that of the circle bundle) in absolute value is bounded above by $h-1$ instead, for $h \geq 1$. 

Since the circle bundles we get on $S$ are boundaries of normal (fibered) disk bundles on it (e.g. \cite[Proof of Theorem~1.2]{BCS}), we conclude that the euler number of these bundles is nothing but the self-intersection number $[S]^2$. We will revisit this claim in the next subsection; for now we can summarize the above discussion as 

\begin{prop} \label{labelprop}
If $S$ is a section of a genus $g$ surface bundle $f: X \to \Sigma_h$ with $g, h \geq 1$, then $|[S]^2| \leq 2h-2$. If $f$ admits a flat connection for which $S$ is parallel, then this bound improves to $|[S]^2| \leq h-1$. 
\end{prop}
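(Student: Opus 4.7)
The plan is to combine the classification of the marked-point bundle $(f,S)$ via its monodromy with the Nielsen extension and the classical Milnor-Wood inequalities, essentially organizing the discussion that precedes the statement into a proof.

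The first step is to interpret the pair $(f,S)$ as a genus $g$ marked-point bundle over $\Sigma_h$, where the marked point on each fiber is its intersection with $S$. By the Earle-Eells/Earle-Schatz contractibility of $\Diff^+_0(\Sigma_{g,1})$, this bundle is classified (up to isomorphism) by its monodromy $\mu_f: \pi_1(\Sigma_h) \to \Map(\Sigma_{g,1})$. Composing with the Nielsen monomorphism $\Map(\Sigma_{g,1}) \hookrightarrow \Homeo^+(S^1)$ produces a flat oriented circle bundle $E_{\mu_f} \to \Sigma_h$.

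The crux is to identify $E_{\mu_f}$ topologically with the unit normal bundle of $S$ in $X$. I would argue that both circle bundles are built from the same monodromy data: on the one hand the Nielsen-extended action on the asymptotic circle $\partial \Hyper$, and on the other the action on a small loop around $p$ in each fiber, which is precisely the oriented unit normal circle to $S$ inside the fiber. This identification is the one invoked in \cite[Proof of Theorem~1.2]{BCS}. Granting it, the euler number of $E_{\mu_f}$ equals the normal euler number of $S$, which by the standard identification is the self-intersection $[S]^2$. Wood's inequality $|\eu(E)| \leq 2h-2$ for flat oriented topological $S^1$-bundles over $\Sigma_h$ then yields the first bound.

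For the refinement under the flat-with-$S$-parallel hypothesis, the lift $\bar\mu_f: \pi_1(\Sigma_h) \to \Diff^+(\Sigma_{g,1})$ exists by the discussion preceding the statement. Composing with the derivative map $D_p: \Diff^+(\Sigma_{g,1}) \to GL^+(2,\R)$ equips the rank-two normal bundle $\nu_S$ with a flat $GL^+(2,\R)$-structure that refines the $\Homeo^+(S^1)$-structure above. Milnor's inequality from \cite{M} then gives $|\eu(\nu_S)| \leq h-1$, and since $\eu(\nu_S)=[S]^2$, the sharper bound follows.

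The main obstacle is the topological identification of $E_{\mu_f}$ with the unit normal bundle of $S$ in the second paragraph; once this standard but non-trivial fact is in hand, the two Milnor-Wood inequalities do the rest of the work. Everything else in the argument is essentially bookkeeping: translating the geometric condition ``$S$ is parallel for a flat connection'' into the algebraic lifting of $\mu_f$ first to $\Diff^+(\Sigma_{g,1})$ and then, via $D_p$, to the structure group $GL^+(2,\R)$ of $\nu_S$.
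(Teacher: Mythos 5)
Your proposal is correct and follows essentially the same route as the paper: Nielsen's extension gives the monomorphism $\Map(\Sigma_{g,1}) \to \Homeo^+(S^1)$, Wood's inequality bounds the euler number of the resulting flat circle bundle, the lift to $\Diff^+(\Sigma_{g,1})$ composed with the derivative map $D_p$ gives the flat $GL^+(2,\R)$-structure to which Milnor's inequality applies, and the identification of this euler number with $[S]^2$ is handled exactly as in the paper (by appeal to \cite[Proof of Theorem~1.2]{BCS}, with the paper additionally rederiving it later via the power of the boundary-parallel Dehn twist). No gaps; the step you flag as the main obstacle is the same one the paper defers to its next subsection.
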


%\noindent It is well-known that the only genus $g \geq 1$ bundle over the $2$-sphere admitting a section is the trivial bundle $\Sigma_g \x S^2 \to S^2$, which has a unique section of self-intersection zero. Together with this fact, we derive Proposition~\ref{labelprop} given earlier. 

\begin{rk}
In \cite{BKM} it was moreover shown that a similar upper bound holds for Lefschetz fibrations if we remove the absolute value; a proof of which, \textit{for sections that miss the critical locus}, can be obtained using the above approach and following the framework of \cite{Smith} for example.) The proof in \cite{BKM} as well as the one in \cite{Bow} uses gauge theory, appealing to Seiberg-Witten invariants on minimal symplectic $4$-manifolds and the adjunction inequality for Seiberg-Witten basic classes. As seen from the above discussion however, the use of gauge theory is rather superfluous.
\end{rk}

\subsection{Surface bundles and the euler class via mapping class groups} \

Once again, assume that $g \geq 2$, and recall the Nielsen construction we sketched above. Define $\widetilde{\Map (\Sigma_g, 1)}$ as the pull-back of $\Map (\Sigma_{g,1})$ to $\widetilde{\Homeo}^+(S^1)$, the subgroup of $\Homeo ^+(\R)$ consisting of homeomorphisms commuting with translation by $1$. Then the central extension
\[
1 \to \Z \to \widetilde{\Homeo} ^+(S^1) \to \Homeo ^+ (S^1) \to 1 
\]
gives rise to the central extension
\[
1 \to \Z \to \widetilde{\Map} (\Sigma_{g,1}) \to \Map (\Sigma_{g,1}) \to 1 .
\]
The cocycle $\eu \in H^2(\Map(\Sigma_{g,1}; \Z))$ associated to this central extension is called the \emph{euler class} for $\Map (\Sigma_{g,1})$, which is by construction the pullback of the euler class $\eu_{S^1}$ of $\Homeo ^+(S^1)$ under the inclusion-induced homomorphism 
\[ H^2(\Map(\Sigma_{g,1}; \Z)) \to H^2(\Homeo ^+(S^1); \Z) \, . \]

On the other hand, we have the following central extension obtained from the boundary capping homomorphism
\[
1 \to \Z \to \Map(\Sigma^1_g) \to \Map(\Sigma_{g,1}) \to 1 \, ,
\]
where the kernel $\Z$ is generated by Dehn twists along a boundary parallel curve $\delta$ on $\Sigma^1_g$. It is well-known that the euler class of $\Map(\Sigma_{g,1})$ associated from this last central extension agrees with the former (see for instance \cite[Section~5.5.6]{FM}). As we will see shortly, the evaluation of the euler class $\eu$ on a $2$-class in $H_2(\Map{\Sigma_{g,1}}; \Z) \cong \Z$ will give an integer, which is precisely the power $n$ of the boundary parallel Dehn twist $t_{\delta}$. 

The monodromy of a surface bundle $f\colon X \to B$ with regular fiber $F \cong \Sigma_g$ and base $B \cong \Sigma_h$, is given by a factorization of the identity element $1\in \Map(\Sigma_g)$ as
\begin{equation} \label{monodromypresentation}
1=\prod _{i=1}^h [\alpha_i, \beta_i] \, ,
\end{equation}
where $\alpha_i$ and $\beta_i$ are images of standard basis elements $a_i, b_i$ of $\pi_1(B)$ under the monodromy map $\mu_f: \pi_1(B) \to \Map(\Sigma_g)$. The choice of a base point amounts to determining this factorization only up to global conjugation. 

Now, a section of $\sigma: B \to X$ gives rise to a lift of the relation (\ref{monodromypresentation}) to a relation of the same form in $\Map(\Sigma_{g,1})$. Moreover, if we remove a fibered tubular neighborhood $N$ of the section $S=\sigma(B)$ from $X$, we obtain a surface bundle $f|_{X \setminus N} \colon X \setminus N \to B$ with regular fibers homeomorphic to $\Sigma_g^1$, which prescribes a further lift of this relation in $\Map(\Sigma_{g,1})$ to a relation of the form:
\begin{equation} \label{liftedmonodromypresentation}
t_{\delta}^{n}=\prod _{i=1}^h [\tilde{\alpha_i}, \tilde{\beta_i}] \, ,
\end{equation}
where $\tilde{\alpha_i}, \tilde{\beta_i}$ are the lifts of $\alpha_i, \beta_i$, and $t_{\delta}$ is a positive Dehn twist along the boundary parallel curve $\delta$ in $\Sigma_g^1$. 

Considering the unit normal disk bundle on $S$ obtained by the derivative map along $\sigma$, we get a fibered tubular neighborhood $N$ of $S$ isomorphic to $\nu S$. This normal neighborhood is clearly isomorphic to the one we get by capping off every fiber component diffeomorphic to $\Sigma_g^1$ by a $2$-disk, where the centers of these disks trace the section $S$. Choosing a framing on the $2$-disk that caps off $\Sigma_g^1$ amounts to choosing a marked point on its boundary, and in turn, we obtain a push-off $S'$ of $S$ on the boundary of its tubular neighborhood. We can then see that the negative of the power $n$ of the boundary parallel Dehn twist $t_{\delta}$ above equals to the intersection number of $S'$ and $S$ \cite{OS}, and therefore is equal to the self-intersection number $[S]^2$, as claimed in the previous subsection. 

Conversely, whenever we have a factorization as in (\ref{liftedmonodromypresentation}) above, we can construct a surface bundle $f: X \to B$, with a regular fiber $F \cong \Sigma_g$ and base $B \cong \Sigma_h$, together with a distinguished section $S$ of self-intersection $-n$. Note that all the arguments above can be generalized to surface bundles which have other boundary components as well. 

Hence, by Proposition~\ref{labelprop}, we see that $t_\delta^{n}$ can be written as at most $h=\lfloor (n+3)/2 \rfloor $ commutators in $\Gamma_g^1$, since such an expression would correspond to a genus $g$ surface bundle over a genus $h$ surface with a section $S$ of self-intersection number $-n$. It follows that the stable commutator length of $t_\delta$ is bounded below by $1/2$.

\subsection{Realizing the upper bound and the commutator length calculation} \

We will now show that the lower bound $1/2$ for the stable commutator length of a boundary parallel Dehn twist $t_\delta$ is indeed achieved by expressing $t_\delta^k$ as the maximum number of commutators allowed by Proposition~\ref{labelprop}. 

In \cite{BKM}, Korkmaz, Monden, and the author obtained the following relation in $\Map (D)$, where $D$ is the $2$-sphere with five holes, and $\delta$, $a_1, \ldots, a_4$, $x_1, \ldots, x_4$ are as shown in Figure~\ref{lantern}.
\begin{eqnarray}\label{eqn}
t_\delta^{2k} 
 &=&\left( \prod_{i=1}^{k}  (t_{x_1}t_{a_2}^{-1} t_{x_2} t_{a_1}^{-1})^{t_{x_3}^{i-1}} \right)
 (t_{x_3}^k t_{a_4}^{-k} t_{x_4}^k t_{a_3}^{-k} ) \, .
\end{eqnarray} 
Here $\beta ^{\gamma}$ is a shorthand notation for the conjugation $\gamma \beta \gamma^{-1}$, for any $\beta, \gamma$ in the same mapping class group. We will reprove this fact by using ``push maps'', which we will describe shortly.

\begin{figure}[ht]
 \begin{center}
      \includegraphics[width=7cm]{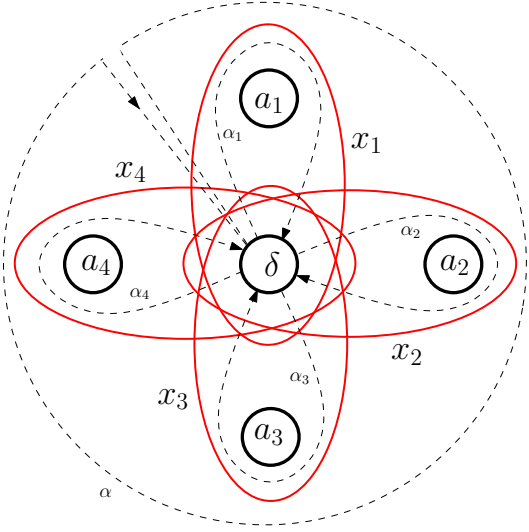}
      \caption{The $2$-sphere $D$ with five boundary components $\delta, a_1, a_2, a_3, a_4$. The oriented paths $\alpha, \alpha_1, \alpha_2, \alpha_3, \alpha_4$ for the push maps are as shown.}
      \label{lantern}
   \end{center}
\end{figure}

There is an elementary self-diffeomorphism of $D$, which is obtained by dragging $\delta$ around other boundary components along an oriented embedded arc $\alpha$ without turning any one of the boundary circles. This is called the \emph{push map of $\delta$ along $\alpha$}, which we denote by $\Push_{\delta}(\alpha)$. Let $A$ be an annulus with one hole (i.e. a pair of pants), where the boundary of its hole is $\delta$, and where $\alpha$, along with $\delta$, is enclosed between its other two boundary components. Using the orientation on $\alpha$ and $D$ we can label the two boundary curves of $A$ by $\alpha'$ and $\alpha''$ as the one ``to the left'' of $\alpha$ and the one ``to the right'' of it, respectively. A key observation is that $\Push_{\delta}(\alpha)$ can be expressed as
\[ 
\Push_{\delta}(\alpha) = t_{\alpha'} t_{\delta}^{-1} t_{\alpha''}^{-1}  
\]
in $\Map (D)$. (None of these are specific to $D$ or $\delta$ of course; the reader can turn to \cite[Section~4.2.1]{FM} for a general treatment.) 

We will use push maps to derive the following generalized lantern relation obtained in \cite{BKM}:
\begin{equation} \label{generalizedlantern}
t_\delta^{2} \,t_{a_1}t_{a_2} t_{a_3} t_{a_4} = t_{x_1} t_{x_2} t_{x_3} t_{x_4} \, ,
\end{equation}
which is equivalent to the \textit{daisy relation} of \cite{EMV}. We believe that the argument we will present using push maps is very easy to visualize and can be directly generalized to derive the analogous relation in a $2$-sphere with $n+4$ holes for any $n \geq 0$ ---with $n=0$ giving the famous lantern relation. 

Let $\alpha_i$ be embedded arcs in $D$ which enclose $a_i$ and have both of its end points on the boundary component $\delta$, for $i=1, \ldots, 4$, and $\alpha$ be the arc that encloses all $a_i$, as shown in Figure~\ref{lantern}. We orient all of them clockwise, and view them as ``loops based at $\delta$ \, '' ---so the end points of the arcs on $\delta$ do not matter. From the naturality of the push map it follows that 
\[ 
\Push_{\delta}(\alpha_1) \, \Push_{\delta}(\alpha_2) \, \Push_{\delta}(\alpha_3) \, \Push_{\delta}(\alpha_4) = \Push_{\delta}(\alpha_1 \alpha_2 \alpha_3 \alpha_4) = \Push_{\delta}(\alpha) \, 
\]
(where the concatenation of the loops goes from left to right), which translates to
\[ 
t_{x_1} t_{\delta}^{-1} t_{a_1}^{-1} t_{x_2} t_{\delta}^{-1} t_{a_2}^{-1}  t_{x_3} t_{\delta}^{-1} t_{a_3}^{-1} t_{x_4} t_{\delta}^{-1} t_{a_4}^{-1} = t_{\delta}^{-1} t_{\delta}^{-1} \, , 
\]
since the outer boundary component of the annulus containing $\alpha$ is null-homotopic in $D$. As $a_i$ and $\delta$ are disjoint from all the other curves, $t_{a_i}$ and $t_{\delta}$ commute with all the other Dehn twists that appear above, allowing us to rewrite this equation to obtain the relation (\ref{generalizedlantern}). 

Taking the $k$-th power of both sides of (\ref{generalizedlantern}) we have
\begin{eqnarray*}
t_\delta^{2k} t_{a_1}^k t_{a_2}^k t_{a_3}^k t_{a_4}^k t_{x_4}^{-k}&=&(t_{x_1}t_{x_2}t_{x_3})^k\\
&=& (t_{x_1}t_{x_2}) (t_{x_1}t_{x_2})^{t_{x_3}}  (t_{x_1}t_{x_2})^{t_{x_3}^2} \cdots (t_{x_1}t_{x_2})^{t_{x_3}^{k-1}}
t_{x_3}^k\\
&=& \left( \prod_{i=1}^{k}  (t_{x_1}t_{x_2})^{t_{x_3}^{i-1}} \right)t_{x_3}^k \, .
\end{eqnarray*}
Lastly, we take the Dehn twists along $a_i$ on the left to the right-hand side to obtain the relation (\ref{eqn}). 
We can now embed $D$ into a genus $g \geq 2$ surface $\Sigma$ with two boundary components $\delta$ (which is the same as the $\delta$ boundary of $D$) and $\delta_0$ by identifying $a_1$ and $a_4$ as in Figure~\ref{relationcurves}. So we obtain the relation: 
\begin{eqnarray}\label{eqn:b}
t_\delta^{2k} 
 &=&\left( \prod_{i=1}^{k}  (t_{x_1}t_{a_2}^{-1} t_{x_2} t_{a_1}^{-1})^{t_{x_3}^{i-1}} \right)
 (t_{x_3}^k t_{a_4}^{-k} t_{x_4}^k t_{a_3}^{-k} ) \\
 &=&\left (\prod_{i=1}^{k}  [t_{x_1}t_{a_2}^{-1}, \phi ]^{t_{x_3}^{i-1}} \right)  [t_{x_3}^k t_{a_4}^{-k}, \psi]   \, .
\end{eqnarray}
in $\Map (\Sigma)$. Here $\phi$ and $\psi$ are orientation preserving self-diffeomorphisms of $\Sigma$ relative to $\partial \Sigma$ mapping $x_1, a_2$ to $a_1, x_2$ and $x_3, a_4$ to $a_3, x_4$ in the same order, respectively. For later purposes, we can moreover assume that both $\phi$ and $\psi$ are supported on the genus $2$ subdomain of $\Sigma$ containing all these curves and $\delta$ (and not $\delta_0$; see Figure \ref{relationcurves}), and such that they become isotopic to identity once the boundary component $\delta$ is capped off. Whenever there is such a self-diffeomorphism, we can insert a commutator; for example 
\[t_{x_3}^k t_{a_4}^{-k} t_{x_4}^k t_{a_3}^{-k} = t_{x_3}^k t_{a_4}^{-k} t_{\psi(a_4)}^k t_{\psi(x_3)}^{-k} \\
= t_{x_3}^k t_{a_4}^{-k} \psi \, t_{a_4}^{k} t_{x_3}^{-k} \psi^{-1} = [t_{x_3}^k t_{a_4}^{-k}, \psi ] . 
\]
Thus we see that $t_\delta^{2k}$ can be expressed as the product of $k+1$ commutators in $\Sigma$.

Writing $t_\delta^{2k}$ as a product of $k+1$ commutators as above and using the lantern relation, one can also show that
$t_\delta^{2k+1}$ can be expressed as a product of $k+2$ commutators; the argument for which is readily available in \cite{BKM}, which we will not repeat here. 

Hence, having realized the upper bound given in Proposition~\ref{labelprop}, we obtained a new proof of the following theorem:

\begin{thm}[Baykur, Korkmaz and Monden \cite{BKM}] \label{BKMtheorem}
Let $g\geq 2$, $n\geq 1$ and let $\Sigma$ be a compact connected oriented surface of genus $g$ with boundary. Let $\delta$ be one of the boundary components of $\Sigma$. Then the $n^{th}$ power $t_\delta^{n}$ of the Dehn twist about $\delta$ is a product of $\lfloor (n+3)/2 \rfloor $ commutators. It follows that the stable commutator length of $t_{\delta}$ is $1/2$. 
\end{thm}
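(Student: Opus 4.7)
The theorem splits into an upper bound on the commutator length (constructive) and a matching lower bound (topological obstruction); the stable commutator length will then drop out. For the lower bound, I will invoke the correspondence between factorizations of $t_\delta^n$ in $\Map(\Sigma_g^1)$ and genus-$g$ surface bundles over $\Sigma_h$ with a distinguished section, as developed in Section~2.2. If $t_\delta^n = \prod_{i=1}^h [\alpha_i,\beta_i]$, that correspondence yields a bundle $X \to \Sigma_h$ with a section $S$ satisfying $[S]^2 = -n$. Proposition~\ref{labelprop} then forces $n = |[S]^2| \leq 2h-2$, i.e.\ $h \geq (n+2)/2$, which for an integer $h$ means $h \geq \lceil (n+2)/2\rceil = \lfloor (n+3)/2 \rfloor$.

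For the upper bound when $n = 2k$ is even, I will start from the generalized lantern relation~(\ref{generalizedlantern}) (already derived from the push-map identity), raise both sides to the $k$-th power, and rearrange to produce~(\ref{eqn}). Next, embed the five-holed sphere $D$ into $\Sigma$ by identifying $a_1$ with $a_4$ as in Figure~\ref{relationcurves}; here the hypothesis $g \geq 2$ is what creates room for a second boundary component $\delta_0$. The critical step is producing self-diffeomorphisms $\phi, \psi$ of $\Sigma$, rel $\partial\Sigma$, supported in a genus-$2$ subdomain disjoint from $\delta_0$, sending $(x_1,a_2)\mapsto (a_1,x_2)$ and $(x_3,a_4)\mapsto (a_3,x_4)$ respectively, and becoming isotopic to the identity once $\delta$ is capped off. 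Their existence is a change-of-coordinates argument: the two pairs of simple closed curves have topologically equivalent configurations inside the subdomain. Substituting $\phi, \psi$ into~(\ref{eqn}) rewrites each of the two ``conjugation-style'' blocks as a commutator, exhibiting $t_\delta^{2k}$ as the product of $k+1 = \lfloor (2k+3)/2 \rfloor$ commutators. For odd $n = 2k+1$, multiply the even-case expression by $t_\delta$ and absorb this extra factor using a single application of the ordinary lantern relation, at the cost of exactly one additional commutator, yielding $k+2 = \lfloor (2k+4)/2 \rfloor$ commutators; the explicit bookkeeping follows~\cite{BKM}.

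With the equality $\cl(t_\delta^n) = \lfloor (n+3)/2 \rfloor$ in hand, the stable commutator length falls out at once:
\[
\scl(t_\delta) \;=\; \lim_{n\to\infty} \frac{\cl(t_\delta^n)}{n} \;=\; \lim_{n\to\infty} \frac{\lfloor (n+3)/2\rfloor}{n} \;=\; \tfrac{1}{2}.
\]
I expect the main technical obstacle to lie in the construction of $\phi$ and $\psi$ with the prescribed support \emph{and} with the capping-off-$\delta$ condition; these two constraints together are precisely what lets the commutator rewriting take place inside $\Map(\Sigma_g^1)$ rather than merely in its quotient by $t_\delta$. Everything else is either a combinatorial identity (the push-map computation of the generalized lantern relation) or a direct application of Proposition~\ref{labelprop}.
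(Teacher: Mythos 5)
Your proposal is correct and follows essentially the same route as the paper: the lower bound via the bundle--section correspondence and Proposition~\ref{labelprop}, and the upper bound via the push-map derivation of the generalized lantern relation, the embedding of $D$ into $\Sigma$, the change-of-coordinates construction of $\phi$ and $\psi$ to rewrite the blocks as commutators, and the ordinary lantern relation for the odd case. The arithmetic $\lceil (n+2)/2\rceil = \lfloor (n+3)/2\rfloor$ and the limit computation for $\scl$ also match the paper's argument.
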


\begin{remark}\label{refremark}
As pointed out by the anonymous referee, the stable commutator length calculation $scl(t_{\delta})=1/2$ which came as a direct corollary of our commutator length calculation above and the fact that the \textit{vertical euler class} $||e||=1/2$ calculated in \cite{Morita2} is not a mere coicidence. Here is a sketch of the argument which was generously provided by the referee: the image of the monodromy homomorphism $\pi_1(B) \to \Map(\Sigma_g)$ lifts to one into $\Map(\Sigma^1_g)$ with boundary on copies of $t_{\delta}$, and the degree with which the boundary winds around delta is precisely equal to the euler class of the closed surface group $\pi_1(B)$ in $\Map(\Sigma_g)$. So by interpreting the stable commutator length function $scl$ as a scaled $L^1$ pseudo-norm (as in \cite{Calegari}), one sees that $scl(t_{\delta})$ is $1/4$ times the $L^1$ norm of the integral class in $H_2(\Map(\Sigma_g))$ --- the factor of $1/4$ comes from the fact that $scl$ counts handles whereas the $L^1$ norm counts triangles. The $L^1$ norm of this integral class is $2$, thus giving us $scl = 1/2 = ||e||$. 
\end{remark}

\vspace{0.2in}
% ==========================================================================================================
\section{Surface bundles and flat connections} \label{flat}

We begin with proving Theorem~\ref{mainthm}. From the relation~\ref{eqn} we already have a genus $g$ surface
bundle over a genus $h$ surface with a section of self-intersection $2h-2$. We will see below that this bundle is 
flat. Our first goal is to slightly modify this bundle to obtain an infinite family of bundles with the same fiber
and base genera, possessing a section of self-intersection $2h-2$. These will still be flat as surface bundles. Finally, by computing the first homology of the total spaces of these bundles, we will verify that we obtain pairwise non-homotopy equivalent bundles. 

\begin{proof}[Proof of Theorem~\ref{mainthm}.] 

Let $\Sigma$ be a genus $g \geq 2$ \, surface with two boundary components $\delta$ and $\delta_0$. As shown above, the relation (\ref{eqn}) holds in $\Map (\Sigma)$, which prescribes a genus $g$ surface bundle over a genus $h=k+1$ surface with two sections $S$ and $S_0$ of self-intersections $2h-2$ and $0$, respectively.

Building upon the relation (\ref{eqn}), we will construct two families of surface bundles with fiber genus $g \geq 3$ and $g=2$, respectively. 

\begin{figure}[ht]
 \begin{center}
      \includegraphics[width=17cm]{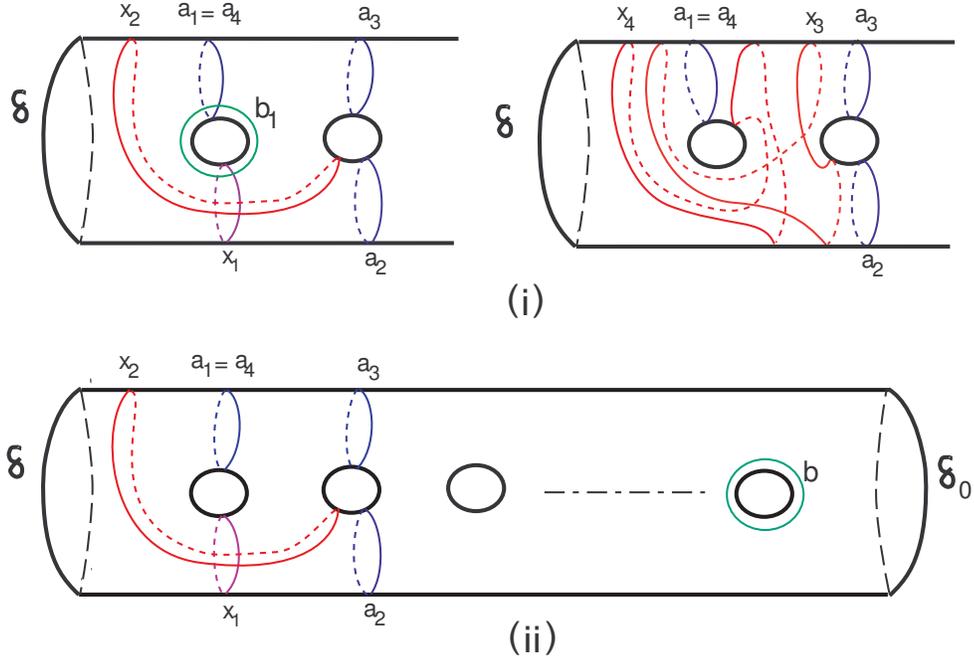}
      \caption{The boundary curves $\delta$ and $\delta_0$ are as given above. The genus two subdomain in $(i)$ embeds into a genus $g \geq 3$ surface as seen in $(ii)$.}
      \label{relationcurves}
   \end{center}
\end{figure}

Let $g \geq 3$ first, and $b$ be a non-separating curve on $\Sigma$ disjoint from $a_i, x_i$, as shown in Figure~\ref{relationcurves}. For $\phi$ and $\psi$ as before (described right after the Equation~\ref{eqn:b} above), the following relation holds in $\Map (\Sigma)$:
\begin{eqnarray}\label{eqn:c}
t_\delta^{2k} 
 &=&\left (\prod_{i=1}^{k}  [t_{x_1}t_{a_2}^{-1}, \phi ]^{t_{x_3}^{i-1}} \right)  [t_{x_3}^k t_{a_4}^{-k}, \psi ] \, 
t_b^m t_b^{-m}   \\
 &=&\left (\prod_{i=1}^{k}  [t_{x_1}t_{a_2}^{-1}, \phi ]^{t_{x_3}^{i-1}} \right)  [t_{x_3}^k t_{a_4}^{-k}, \psi \, t_b^m]   \, ,
\end{eqnarray}
since $\psi$ and $t_b$ commute. 

Capping off the boundary components of $\Sigma$ by two disjoint disks, we will obtain our first family of promised bundles. Under the natural homomorphism from $\Map (\Sigma)$ to $\Map(\Sigma_g)$, the mapping class group of the closed genus $g$ surface, induced by capping off the boundary components of $\Sigma$ by two disjoint disks, the above relation maps to the relation
\begin{eqnarray}\label{eqn:d}
1
 &=&\left( \prod_{i=1}^{k}  [t_{a_1} t_{a_2}^{-1}, 1 ]^{t_{a_3}^{i-1}} \right)
 [t_{a_3}^k t_{a_1}^{-k} , t_b^m]) =  [t_{a_1} t_{a_2}^{-1}, 1 ]^k \, [t_{a_3}^k t_{a_1}^{-k}, t_b^m] \, 
\end{eqnarray}
in $\Map (\Sigma_g)$. Recall from earlier that such a factorization of identity specifies a genus $g$ bundle over a genus $h=k+1$ bundle and the very fact that they lift to relations as above hands us distinguished sections with self-intersections read off from the powers of boundary parallel Dehn twists in these relations. Thus we get a family of genus $g \geq 3$ surface bundles $f_m: X_m \to \Sigma_h$ with two sections $S$ and $S_0$ of self-intersections $2h-2$ and $0$, respectively. 

Now let $g=2$, and $b_1$ be the non-separating curve on $\Sigma$ which is geometrically dual to $a_1$; See Figure \ref{relationcurves}. The following relation holds in $\Map (\Sigma)$:
\begin{eqnarray}\label{eqn:e}
t_\delta^{2k} 
 &=&\left(\prod_{i=1}^{k}  [t_{x_1}t_{a_2}^{-1}, \phi ]^{t_{x_3}^{i-1}} \right)  [t_{x_3}^k t_{a_4}^{-k}, \psi ] \, [t_{b_1}^m , 1]   \, .
\end{eqnarray}
We thus get our second family of bundles: what we have in hand is a family of genus $2$ surface bundles $g_m: Y_m \to \Sigma_{h+1}$, for $h=k+1$, with two sections $S$ and $S_0$ of self-intersections $2h-2$ and $0$, respectively, which are prescribed by:
\begin{eqnarray}\label{eqn:f}
1
 &=&\left( \prod_{i=1}^{k}  [t_{a_1} t_{a_2}^{-1}, 1 ]^{t_{a_3}^{i-1}} \right)
 [t_{a_3}^k t_{a_1}^{-k}] \, [ t_{b_1}^m, 1] =  [t_{a_1} t_{a_2}^{-1}, 1 ]^k \, [t_{a_2}^k t_{a_1}^{-k}] [t_{b_1}^m, 1] \, .
\end{eqnarray}

We will show that the bundles $(X_m, f_m)$ (resp. $(Y_m, g_m)$ together with distinguished sections $S$ yield the desired families. 

Let us first show that the bundles $(X_m, f_m)$ are all flat: We can lift all $t_{a_i}$ and $t_b$ to some $\bar{t}_{a_i}$ and $\bar{t}_b$ in $\text{Diff}^+_0(\Sigma_{g})$ which are curve twists compactly supported in tubular neighborhoods of the associated curves disjoint from each other whenever the pair of curves are disjoint. Thus the supports of $\bar{t}_{a_1}$, $\bar{t}_{a_2}$, $\bar{t}_{a_3}$ are chosen to be pairwise disjoint, and the support of $\bar{t}_b$ is disjoint from the first two. We then have the following lift of the monodromy factorization in $\text{Diff}^+_0(\Sigma_{g})$:
\begin{eqnarray}\label{eqn:g}
\xi
 &=&
[\bar{t}_{a_1} \bar{t}_{a_2}^{-1}, 1 ]^k \, [\bar{t}_{a_3}^k \bar{t}_{a_1}^{-k}, \bar{t}_b^m]
\end{eqnarray}
Since $\bar{t}_{a_1}$ and $\bar{t}_{a_2}$ have supports disjoint from that of $\bar{t}_b$, they commute with it in $\text{Diff}^+_0(\Sigma_{g})$. Hence we get $\xi = 1$ in $\text{Diff}^+_0(\Sigma_{g})$, showing that our bundle is flat. For $(Y_m, g_m)$ we can argue similarly and show that the factorization on the right-hand side of (\ref{eqn:d}) lifts to a factorization of the identity in $\text{Diff}^+_0(\Sigma_{2})$, proving the flatness of these bundles as well.

%Since $\bar{t}_{a_1}$ and $\bar{t}_{a_2}$ commute in $\text{Diff}^+_0(\Sigma_{g})$, $[\bar{t}_{a_1} \bar{t}_{a_2}^{-1}, 1 ]=1$ in this group. The same goes for $[\bar{t}_{a_3}^k \bar{t}_{a_1}^{-k}, 1]$ and $[\bar{t}_b^m, 1]$. Hence we get $\xi = 1$ in $\text{Diff}^+_0(\Sigma_{g})$, showing that our bundle is flat. For $(Y_m, g_m)$ we can argue similarly and show that the factorization on the right-hand side of (\ref{eqn:d}) lifts to a factorization of the identity in $\text{Diff}^+_0(\Sigma_{2})$, proving the flatness of these bundles as well.

Next, we verify that for any $m \geq 0$, $(X_m, f_m)$ does not admit a flat connection for which the section $S$ is parallel. This simply follows from Proposition~\ref{labelprop}, since the self-intersection number $2h-2$ of $S$ obstructs this section to be parallel for any $h \geq 2$. As for $(Y_m, g_m)$ along with the distinguished section $S$, recall that if $h$ is the base genus of this bundle, then the self-intersection number of $S$ is $2h-4$. So the Proposition~\ref{labelprop} leads to the same obstruction, provided $h \geq 4$ in this case.

A straightforward calculation using the monodromy factorization of $(X_m, f_m)$ shows that $H_1(X_m)= \Z^{2h+2g-3} \oplus \Z_m $. It follows that $X_m$ and $X_{m'}$ are pairwise non-homotopic for any $m \neq m'$. Similarly, from the monodromy factorization of $(Y_m, g_m)$ we calculate $H_1(Y_m)= \Z^{2h+1} \oplus \Z_m$, leading the same conclusion.
\end{proof}

\begin{rk} \label{fibersums} \rm
The constructions of the above bundles $(X_m, f_m)$ and $(Y_m, g_m)$ have obvious geometric interpretations. These bundles are nothing but \textit{horizontal} and \textit{vertical} stabilizations of the bundle one gets from the very first relation (\ref{eqn:b}) above, as described in Theorem~4 of \cite{B}. That is, they are obtained by \textit{section summing} and \textit{fiber summing} this initial bundle with some standard bundles.  
\end{rk}

We can in fact see that $X_m$ (resp. $Y_m$) do not admit complex structures with either orientation. This follows from Lemma~2 of \cite{B} (also see \cite{Kots}), which in this case states that if $X$ is the total space of a genus $g$ surface bundle over a genus $h$ surface with $g,h \geq 2$, such that $b_2(X)>0$ and $b_1(X)$ odd, then $X$ does not admit a complex structure with either orientation. Since each $X_m$ admits a section, we have $b_2(X_m) \geq 2 >0$. We also see that $b_1(X_m)= 2h + 2g - 3$ is odd. We conclude that $X_m$ does not admit a complex structure with either orientation. The claim is proved for $Y_m$ in an identical way. We conclude that the existence of flat bundles with sections which admit no flat connections making the sections parallel is not relevant to the bundles being holomorphic or not. Product bundles on $\Sigma_h \x \Sigma_h$ with diagonal sections, and in general Atiyah-Kodaira examples covered in \cite{BCS} are all holomorphic, whereas the \linebreak $4$-manifolds $X_m$ and $Y_m$ we obtain above do not admit complex structures with either orientation. We also note that our examples span all possible $g, h \geq 2$ (even when $g=2$ and $h=2,3$, we have one example). Yet another qualitative difference between the bundles used in \cite{BCS} and the ones we constructed above is the existence of the self-intersection zero section $S_0$, which is \emph{disjoint} from the section $S$ of self-intersection $2h-2$. This difference simplifies our derivation of Corollary~\ref{cormorita} greatly:

\begin{proof}[Proof of Corollary~\ref{cormorita}.]
For each $g \geq 3$ and $h \geq 2$, we constructed a genus $g$ surface bundle $f_m: X_m \to \Sigma_h$ with two sections $S$ and $S_0$ of self-intersections $2h-2$ and $0$, respectively. On the other hand for $g=2$ and $h \geq 4$, we obtained a genus $2$ surface bundle $g_m: Y_m \to \Sigma_{h}$ with two sections $S$ and $S_0$ of self-intersections $2h-4$ and $0$, respectively. We will present our arguments for the former, which can be then easily adapted for the latter.

For any $r \geq 2$, let $S_1, \ldots, S_{r-1}$ be sections of $f_m$ obtained by pushing the self-intersection zero section $S_0$ off itself, all \emph{disjoint} from each other and $S$. Let us also label $S$ as $S_{r}$. When $r=1$, we only take $S_1$. (Although the first $r-1$ sections are obviously homotopic, for what follows, what we need is to have \emph{disjoint} sections, not necessarily \emph{distinct} ones.) Thus we have $S_r$ disjoint sections of $(X_m, f_m)$, which prescribes the monodromy map $\mu_{f_m}: \pi_1(\Sigma_{h}) \to \Map (\Sigma)$, where $\Sigma$ is a genus $g$ surface with $r$ marked points. If the subgroup $\mu_{f_m}(\pi_1(\Sigma_{h}))$ of $\Map (\Sigma)$ were to lift to $\Diff ^+ (\Sigma)$, then we would get a lift of the monodromy $\bar{\mu}_{f_m}: \pi_1(\Sigma_{h}) \to \Diff ^+ (\Sigma)$. This however implies that the section $S$ is parallel with respect to some flat connection, which is impossible by Proposition~\ref{labelprop}. Hence $\mu_{f_m}(\pi_1(\Sigma_{h})) < \Map (\Sigma)$ cannot be lifted to $\Diff ^+ (\Sigma)$. Varying $m \in \Z^+$ hands us the infinitely many non-conjugate subgroups, as promised.
\end{proof}

%\begin{remark} \rm
% We can further observe that all the bundles we constructed are of signature zero, in contrast with the non-trivial Atiyah-Kodaira bundles used in \cite{BCM}, which all have non-zero signatures. A quick way to see this is by decomposing them into bundles of smaller fiber or base genera, as noted in Remark~\ref{fibersums} above. For instance, each genus $g$ bundle $(X_m, f_m)$ over a genus $h$ surface can be seen as a section sum of a genus $2$ bundle and a genus $g-2$ bundle, split along a separating curve that would bound the genus $2$ subsurface given in Figure~2(i). (Note that *this* section has self-intersection zero. The latter can be decomposed as a section sum of a trivial genus $g-3$ bundle and a genus $1$ bundle with monodromy $[1, t_b^m]$. However, any genus $g \leq 2$ bundle has signature zero, so does the trivial bundle. Thus, by Novikov additivity, we can conclude that this bundle has signature zero. One can similarly show that each $(Y_m, g_m)$ has signature zero.
%\end{remark} 

%\newpage
\vspace{0.4in}

\end{document}